\newtheorem{lemma}{Lemma}
\newtheorem{theorem}{Theorem}
\newtheorem{corol}{Corollary}
\begin{document}

\vspace*{5mm}

\noindent \textbf{\LARGE Distance Regular Colorings \\ 
of $n$-Dimensional Rectangular Grid }
\flushbottom
\date{}

\vspace*{5mm}
\noindent
\textsc{Sergey V. Avgustinovich} \hfill \texttt{avgust@math.nsc.ru} \\
%{\small Sobolev Institute of Mathematics,
%prosp. Ak. Koptyuga 4, Novosibirsk,  Russia
 %Novosibirsk State University \\
\textsc{Anastasia Yu. Vasil'eva \footnote{The work of the second author 
is partially supported by the Russian Foundation for Basic Research 
under the grant no. 13-01-00463}} \hfill \texttt{vasilan@math.nsc.ru} \\
{\small Sobolev Institute of Mathematics,
%prosp. Ak. Koptyuga 4, Novosibirsk,  Russia
 \\
Novosibirsk State University \\

\medskip

\begin{center}
\parbox{11,8cm}{\footnotesize
\textbf{Abstract.}
We study the infinite graph of $n$-dimensional rectangular grid
that doesn't appear distance regular and the distance regular colorings
of this graph, which are defined as the distance colorings
with respect to completely regular codes.
It is proved that the elements of the parameter
matrix of an arbitrary distance regular coloring form two monotonic sequences.
It is shown that every irreducible distance regular coloring of the
$n$-dimensional rectangular grid has at most $2n+1$ colors.}
\end{center}

\baselineskip=0.9\normalbaselineskip

\section{Introduction} \label{introd}
\label{prelim}
A coloring of the vertices of a graph is perfect if any two vertices of the same color
"see" the same number of vertices of any fixed color.
If in addition the vertices are colored by distance 
from some initial set of vertices then the coloring is distance regular. 
This notion %goes back to P. Delsarte \cite{Dels} and 
is closely related with distance regular graphs. In fact, 
in a distance regular graph the distance coloring 
with respect to an arbitrary vertex is perfect.

Complete classification of the perfect colorings of the 2-dimensional rectangular grid 
into 2, 3 and up to 9 colors can be found in \cite{Ax}, \cite{Puz} and \cite{Krotov-prep} 
respectively. 
%(the last result was obtained by computing, while the first and second are theoretical.)
All parameters of the distance regular colorings of the 2-dimensional rectangular grid 
were described in \cite{AV}. Parameters of perfect colorings with two colors 
of infinite circulant graphs were studied in \cite{Khor1, Khor2, Par}.
The results of this paper were presented in part in \cite{AV2012}.

%Distance regular colorings are very useful when studying invariant
%properties of perfect structures. 
%Notice that 
For a distance regular
graph the distance partition with respect to an arbitrary vertex is
a perfect coloring; its parameters does not depend on the choice
of the vertex. The completely regular codes in distance regular graphs
are extensively investigated. We study the distance regular colorings in
the graph of $n$-dimensional rectangular grid, which is not distance regular. 
We first prove the monotonicity of the upper and lower diagonals 
of the three-diagonal parameter matrix of a distance regular coloring (Theorem \ref{monot}). 
Then we obtain that the number of colors does not exceed $2n+1$ 
and show that this bound is attainable (Theorem \ref{irr}).

Let's pass to the precise definitions.
A $k$-coloring of the vertices of a graph can be presented as
a function $\varphi$ over the graph vertices with values in the set
$\{1,2,\ldots,k\}$ and as a partition $\{C_1,C_2,\ldots,C_k\}$ of
the graph vertices, where $C_i= \{{\bf x} \ : \ \varphi({\bf x})=i\}, \
i=1,2,\ldots,k$. We do not distinguish between these two interpretations.
A $k$-coloring is \emph{ perfect} (in other
terms, the partition is \emph{ equitable}) with the parameter matrix $A
=(\alpha_{ij})_{k\times k}$ if any vertex of the color $i$ has exactly
$\alpha_{ij}$ adjacent vertices of color $j$ for all $i,j\in\{1,2,...,k\}$.
A~perfect coloring is \emph{ distance regular} if there exists an order
(call, the \emph{standard order}) of the colors such that 
the parameter matrix is three-diagonal with respect to this ordering.
Note that exactly two color ordering of an arbitrary distance regular coloring
are standard: the second one is inverse to the first.
In what follows we suppose that the colors are numbered in the standard order. 
In other words, a perfect coloring is distance regular if the set of
vertices of a color $i, \ i=2,3,\ldots,k,$ \ consists of all
vertices at distance $i-1$ from the set of vertices of the first color.
This notion is intimately related to the notion of completely
regular code. Actually, according to the definition of a
completely regular code \cite{Neumaier}, the vertices of the first color 
(the last color) compose a completely regular code .
When studying codes, we stress at the code distance and cardinality. 
When studying colorings, we regard at the code complement 
and stress at the parameter matrix and structure of all colors.

Denote nonzero elements of the parameter matrix of a distance regular
coloring:

$l_i=\alpha_{i,i-1}  \ (i=2,3,\ldots,k)$ -- the \emph{ lower degree} of the
$i$-th color;

$k_i=\alpha_{i,i}  \ (i=1,2,\ldots,k) $ -- the \emph{ inner degree} of the
$i$-th color;

$u_i=\alpha_{i,i+1}  \ (i=1,2,\ldots,k-1) $ -- the \emph{ upper degree} of
the $i$-th color.

\noindent
In these terms, any vertex of color $i$ "sees" \ $l_i$ vertices of color $i-1$, \
$k_i$ vertices of color $i$ and $u_i$ vertices of color $i+1$; 
obviously $l_i+k_i+u_i=2n$ for any $i$.
We will say that the color $i$ has the \emph{ degree triple}  $(l_i,k_i,u_i)$.

Let ${\bf e^i}, \ i=1,\ldots,n$, \ be the unit vector, 
i.e. $(0,1)$-vector with a unique one at the $i$-th position 
and $S({\bf x})$ be the sphere of radius 1 centered at ${\bf x}$.
Fix an arbitrary distance regular coloring $\varphi$ of $\mathbb{Z}^n$
and an arbitrary vertex ${\bf x}\in \mathbb{Z}^n, \ \varphi({\bf
x})=i$. Let us introduce the following sets of unit vectors:
\begin{eqnarray*}
L_{\varphi}({\bf x}) = \left\{{\bf y}-{\bf x} \ : \
\varphi({\bf y})=i-1, \ {\bf y}\in S({\bf x})\right\} ,\\
I_{\varphi}({\bf x}) = \left\{{\bf y}-{\bf x} \ : \
\varphi({\bf y})=i, \ {\bf y}\in S({\bf x})\right\} , \phantom{vvii} \\
U_{\varphi}({\bf x}) = \left\{{\bf y}-{\bf x} \ : \
\varphi({\bf y})=i+1, \ {\bf y}\in S({\bf x})\right\} .
\end{eqnarray*}
We omit the subscript $\varphi$ if the coloring is clear from the context.
We refer to the vectors in the sets $L_{\varphi}({\bf x}), I_{\varphi}({\bf x}),
U_{\varphi}({\bf x})$ as \emph{ lower, inner} and \emph{ upper directions} 
of the vertex ${\bf x}$ with respect to $\varphi$.
Obviously,
\begin{eqnarray*}
|L_{\varphi}({\bf x})|=l_i, \ \ |I_{\varphi}({\bf x})|=k_i, \ \
|U_{\varphi}({\bf x})|=u_i \ \ \ \ \mbox{and} \\
L_{\varphi}({\bf x})\cup I_{\varphi}({\bf x})\cup
U_{\varphi}({\bf x})= \{\pm {\bf e^i}, \ \ \ i=1,\ldots,n\}.
\end{eqnarray*}

We say that two colorings $\varphi$ and $\psi$ are \emph{ equivalent}
if $\psi$ can be obtained from $\varphi$ by some translate and some color reordering.
In particular, for a distance regular coloring $\varphi$,
the coloring $\psi$ with the inverse order of colors is equivalent
and distance regular; moreover, for an arbitrary vertex ${\bf x}$,
\begin{equation}\label{L=U}
L_{\varphi}({\bf x})=U_{\psi}({\bf x}), \ \ \ \
U_{\varphi}({\bf x})=L_{\psi}({\bf x}), \ \ \ \
I_{\varphi}({\bf x})=I_{\psi}({\bf x}).
\end{equation}

For any set $D$ of directions, use the
notation $-D$ for the set $\{-d \ : \ d\in D\}$.

%%%%%%%%%%%%%%%%%%%%%%%%%%%%%%%%%%
\section{Reducible colorings} \label{reducible}
Consider colorings of 1-dimensional grid $\mathbb{Z}^1$. 
For an arbitrary $k$, there exist only three nonequivalent
perfect $k$-colorings; they are distance regular and periodical. 
We write theirs periods as sequences of colors:
$$1,2,\ldots,k-1,k,k-1,\ldots,2 ;$$
$$1,1,2,\ldots,k-1,k,k-1,\ldots,2 ;$$
$$1,1,2,\ldots,k-1,k,k,k-1,\ldots,2 .$$

A coloring $\varphi=\varphi(x_1,\ldots,x_n)$ of $\mathbb{Z}^n$ is
called \emph{ reducible} if it can be reduced to the 1-dimensional coloring,
i.e. there exists a $k$-coloring $\varphi_1$ of $\mathbb{Z}^1$ and
$\delta_1,\ldots,\delta_k\in\{0,1,-1\}$ such that
for any $(x_1,x_2,\ldots,x_n)\in \mathbb{Z}^n$,
\begin{equation}\label{nfrom1}
\varphi(x_1,x_2,\ldots,x_n) =
\varphi_1\left(\delta_1x_1+\delta_2x_2+\ldots+\delta_nx_n\right) .
\end{equation}
Readily,
if the coloring $\varphi_1$ of $\mathbb{Z}^1$ is distance regular
then the coloring $\varphi$ defined in accordance with (\ref{nfrom1}) is also distance regular.

The parameter matrix of a perfect
coloring is referred to as \emph{reducible} if it admits a reducible
coloring. All reducible matrices (obtained  by (\ref{nfrom1}))have the form 
$$\left (
\begin{array}{ccccccc}  %{lllllll}
2n-\varepsilon_1 r & \varepsilon_1 r& .& .& . &0& 0\\ r & 2n-2r & r & .& . &. & 0\\
%0 & r & n-2r & r& .&. & .\\
 . & . & . & .& . &.& .\\
0 & . & . & .& r &2n-2r & r\\
 0 & 0 & . & .& . &\varepsilon_2 r & 2n-\varepsilon_2 r\end{array} \right) ,$$
 where $r$ equals to the number of nonzero
coefficients $\delta_i$ and $\varepsilon_1,\varepsilon_2\in\{1,2\}$
 \ (the colorings with $(\varepsilon_1,\varepsilon_2)=(1,2)$ and
$(2,1)$ are equivalent).

%Using formula (\ref{nfrom1}), one can construct a distance regular coloring of $\mathbb{Z}^n$ 
%from the coloring of $\mathbb{Z}^1$ by means of a linear transform. 
%Could we derive in such a way, i.e. by linear transform, 
%a distance regular coloring of $\mathbb{Z}^n$ 
%from a distance regular coloring of $\mathbb{Z}^m, \ m<n$? 
%Generally speaking, no. 
%For any number of colors $k>2$ applying a linear transform to a perfect 
%(even then distance-regular)$k$-coloring of $\mathbb{Z}^m$ 
%we produce the coloring that is not perfect in general.

We call  a coloring $\varphi: \mathbb{Z}^n\longrightarrow\{ 1,\ldots,k\}$ 
\emph{ cylindrical}if it is obtained
from a coloring $\varphi': \mathbb{Z}^m\longrightarrow\{ 1,\ldots,k\}, \ m<n,$
by adding nonessential variables. Clearly, once the initial coloring $\varphi'$ is perfect (distance regular) then the cylindrical coloring is also perfect (respectively, distance regular).

%%%%%%%%%%%%%%%%%%%%%%%%%%%%%%%%%%%%%%%%%%%%%%%%%%%%%%%%%
\section{Upper and lower degrees} \label{degrees}
Fix an arbitrary distance regular $k$-coloring $\varphi$ of $\mathbb{Z}^n$.
We are going to prove the monotonicity of the upper degrees 
(and the lower degrees) of the coloring $\varphi$. 
Note that not every distance regular graph possesses this property.

\begin{lemma} \label{incl}
For any $i, \ 1\leq i\leq k-1,$ and any two adjacent vertices ${\bf x}$ and
${\bf y}$ of colors $i$ and $i+1$ respectively we have $L({\bf
x})\subseteq L({\bf y})$ and $U({\bf x})\supseteq U({\bf y})$.
\end{lemma}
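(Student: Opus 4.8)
The plan is to exploit the tridiagonal structure of the parameter matrix, which forces any two adjacent vertices to differ in colour by at most one, together with a unit-square argument in the lattice. Fix $i$ and adjacent vertices $\mathbf{x}$ (colour $i$) and $\mathbf{y}$ (colour $i+1$), and write $\mathbf{y}=\mathbf{x}+\mathbf{e}$ for a unit vector $\mathbf{e}$, so that $\mathbf{e}\in U(\mathbf{x})$ and $-\mathbf{e}\in L(\mathbf{y})$. To prove $L(\mathbf{x})\subseteq L(\mathbf{y})$ I would take an arbitrary $\mathbf{d}\in L(\mathbf{x})$, so that $\varphi(\mathbf{x}+\mathbf{d})=i-1$, and show $\varphi(\mathbf{y}+\mathbf{d})=i$, which is precisely the condition $\mathbf{d}\in L(\mathbf{y})$.

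First I would dispose of the degenerate directions. The case $\mathbf{d}=\mathbf{e}$ cannot occur, since it would force $\mathbf{x}+\mathbf{e}=\mathbf{y}$ to carry both colours $i-1$ and $i+1$. If $\mathbf{d}=-\mathbf{e}$, then $\mathbf{y}+\mathbf{d}=\mathbf{x}$ already has colour $i$ and we are done. Otherwise $\mathbf{d}$ and $\mathbf{e}$ lie along distinct coordinate axes, so the four points $\mathbf{x}$, $\mathbf{x}+\mathbf{e}$, $\mathbf{x}+\mathbf{d}$, $\mathbf{x}+\mathbf{e}+\mathbf{d}$ form a unit square whose three known corners carry the colours $i$, $i+1$ and $i-1$ respectively.

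The heart of the argument is pinning down the colour $c$ of the fourth corner $\mathbf{z}=\mathbf{y}+\mathbf{d}=(\mathbf{x}+\mathbf{d})+\mathbf{e}$. Since the parameter matrix is three-diagonal, adjacent vertices differ in colour by at most one. As $\mathbf{z}$ is adjacent to $\mathbf{y}$ (colour $i+1$) we get $c\in\{i,i+1,i+2\}$, and as $\mathbf{z}$ is adjacent to $\mathbf{x}+\mathbf{d}$ (colour $i-1$) we get $c\in\{i-2,i-1,i\}$; the two constraints intersect only in $c=i$. Hence $\varphi(\mathbf{y}+\mathbf{d})=i$, giving $\mathbf{d}\in L(\mathbf{y})$ and therefore $L(\mathbf{x})\subseteq L(\mathbf{y})$. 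I expect this colour-pinning step to be the crux of the proof; everything else is bookkeeping about which corner is adjacent to which.

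Finally, the inclusion $U(\mathbf{x})\supseteq U(\mathbf{y})$ would follow by symmetry rather than by repeating the square argument. Passing to the inverse coloring $\psi$, the vertex $\mathbf{y}$ receives the colour one less than $\mathbf{x}$, so the already-proved first inclusion applied to $\psi$ yields $L_{\psi}(\mathbf{y})\subseteq L_{\psi}(\mathbf{x})$; translating through the identities (\ref{L=U}), namely $L_{\psi}(\mathbf{y})=U_{\varphi}(\mathbf{y})$ and $L_{\psi}(\mathbf{x})=U_{\varphi}(\mathbf{x})$, this is exactly $U(\mathbf{y})\subseteq U(\mathbf{x})$. Alternatively, the identical unit-square argument, now starting from an arbitrary $\mathbf{d}\in U(\mathbf{y})$, pins the fourth corner to colour $i+1$ and yields the inclusion directly.
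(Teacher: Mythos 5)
Your proposal is correct and follows essentially the same route as the paper: the paper's one-line proof ("since the coloring is distance regular, ${\bf y}+d$ has color $i$") is exactly your colour-pinning step, where adjacency of the fourth corner to a vertex of colour $i-1$ and a vertex of colour $i+1$ forces colour $i$. Your explicit handling of the degenerate directions $\mathbf{d}=\pm\mathbf{e}$ and the derivation of $U({\bf x})\supseteq U({\bf y})$ via the inverse colouring and (\ref{L=U}) merely fill in details the paper leaves implicit.
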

\begin{proof} Let $d\in L({\bf x})$; % and $d\neq {\bf x}-{\bf y}$,
then the color of the vertex ${\bf z}={\bf x}+d$ equals $i-1$. 
Since the coloring is distance regular, 
the vertex ${\bf v}\in {\bf y}+d$ has 
the color $i$, and then $d\in L({\bf y})$. \end{proof}

We refer to the sequence of vertices 
${\bf x^1},{\bf x^2},\ldots,{\bf x^r}\in\mathbb{Z}^n \ (r\leq k)$ 
as an \emph{ ascending chain}, if
$\varphi({\bf x^i})=\varphi({\bf x^{i-1}})+1$ and the distance between
${\bf x^{i-1}}$ and ${\bf x^i}$ equals 1 for $i=2,\ldots,r$. 
As a simple consequence of Lemma \ref{incl}, we have
\begin{corol} \label{asc-chain}
Let $r\leq k$, and let ${\bf x^1},{\bf x^2},\ldots,{\bf x^r}$ be an
ascending chain. Then
$$L({\bf x^1})\subseteq L({\bf x^2})\subseteq
\ldots\subseteq L({\bf x^r}) ,$$
$$U({\bf x^1})\supseteq U({\bf x^2})\supseteq
\ldots\supseteq U({\bf x^r}) .$$
\end{corol}
So, we obtain the monotonicity of the lower degrees and the upper degrees:
\begin{theorem} \label{monot}
For an arbitrary distance regular $k$-coloring of $\mathbb{Z}^n$, it holds
$$l_2\geq\ldots\geq l_{k-1}\geq l_k \ \ \ \ \mbox{and} \ \ \ \
u_1\leq u_2\leq\ldots\leq u_{k-1} .$$
\end{theorem}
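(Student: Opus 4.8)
The plan is to reduce the global monotonicity to a single local comparison between two consecutive colors and then chain it. The crucial point is that, by perfectness, the cardinalities $|L({\bf x})|$ and $|U({\bf x})|$ depend only on the color of ${\bf x}$: for every vertex ${\bf x}$ of color $i$ one has $|L({\bf x})|=l_i$ and $|U({\bf x})|=u_i$, as already recorded in the preliminaries. Hence it suffices, for each consecutive pair of colors $(i-1,i)$, to exhibit one adjacent pair of vertices of these colors, apply Lemma \ref{incl} to it, and pass to cardinalities; the direction of each resulting inequality is read off from the direction of the corresponding inclusion, and chaining over $i$ produces the two monotone sequences.

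The one thing to verify is that for every $i$ with $2\le i\le k$ there actually is an adjacent pair of colors $(i-1,i)$. This is where distance regularity enters. By definition color $i$ is exactly the distance-$(i-1)$ layer around $C_1$, and this layer is nonempty for all $i\le k$. Any vertex ${\bf y}$ in it with $i-1\ge 1$ has a neighbor along a shortest path to $C_1$, and that neighbor lies in the distance-$(i-2)$ layer, i.e.\ has color $i-1$. Thus every color-$i$ vertex ${\bf y}$ with $i\ge 2$ possesses a lower neighbor ${\bf x}$ of color $i-1$ (equivalently $l_i\ge 1$), which is the required adjacent pair.

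Fixing such a pair, Lemma \ref{incl} gives $L({\bf x})\subseteq L({\bf y})$ and $U({\bf x})\supseteq U({\bf y})$; since $|L|$ and $|U|$ are functions of the color alone, taking cardinalities compares $l_{i-1}$ with $l_i$ and $u_{i-1}$ with $u_i$. Letting $i$ run from $2$ to $k$ assembles these comparisons into the monotone chain for the lower degrees $l_2,\ldots,l_k$ and the monotone chain for the upper degrees $u_1,\ldots,u_{k-1}$, which is the assertion of the theorem. Alternatively, one may invoke Corollary \ref{asc-chain} along a single ascending chain ${\bf x^1},\ldots,{\bf x^k}$ meeting every color once, obtaining all the inclusions simultaneously.

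Since the set-theoretic heart of the argument is already supplied by Lemma \ref{incl} and Corollary \ref{asc-chain}, I expect no serious difficulty: the only genuine point requiring care is the pair-existence step, namely confirming $l_i\ge 1$ for $2\le i\le k$ from the distance-layer description of the colors. Once that is in place, the theorem is just the formal step of reading cardinality inequalities off set inclusions, which is immediate after perfectness has been used to make $|L({\bf x})|$ and $|U({\bf x})|$ depend only on $\varphi({\bf x})$.
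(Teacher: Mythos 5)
Your argument is essentially the paper's own: the paper obtains Theorem \ref{monot} directly from Corollary \ref{asc-chain} (equivalently, from Lemma \ref{incl} applied to one adjacent pair of vertices for each pair of consecutive colors) by passing to cardinalities, exactly as you propose; your extra verification that such an adjacent pair exists for every $i\ge 2$ (i.e.\ $l_i\ge 1$, from the distance-layer description of the colors) is a point the paper leaves implicit and is worth making explicit. However, you should not leave the step of ``reading off the direction'' unexamined: Lemma \ref{incl}, with $\varphi({\bf x})=i$ and $\varphi({\bf y})=i+1$, gives $L({\bf x})\subseteq L({\bf y})$ and $U({\bf x})\supseteq U({\bf y})$, hence $l_i\le l_{i+1}$ and $u_i\ge u_{i+1}$ --- the lower degrees are non-decreasing and the upper degrees non-increasing, which is the \emph{opposite} of the inequalities as printed in the statement. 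The printed signs are evidently a typo: the example in Section \ref{main} has $l_i=i-1$ and $u_i=2n-i+1$, and the three-segment decomposition stated right after the theorem (first segment $l_i<u_i$, last segment $l_i>u_i$) is consistent only with $l$ non-decreasing and $u$ non-increasing. So your proof is correct and identical in substance to the paper's, but you should write out the inequalities explicitly in the direction the inclusions actually yield rather than asserting they assemble into the statement as printed.
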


It follows from Theorem \ref{monot} that the sequence of colors is partitioned
into three segments
$\{1,\ldots, I_1\}, \ \{I_1+1,\ldots,I_2-1\}, \ \{I_2,\ldots,k\}$
(and the parameter matrix, into three layers):

\noindent in the first segment, for every $i\in\{1,\ldots, I_1\}$, it holds $l_i<u_i$,

\noindent in the second one, for every $i\in \{I_1+1,\ldots,I_2-1\}$, it holds $l_i=u_i$,

\noindent in the third, for every $i\in \{I_2,\ldots,k\}$, it holds $l_i>u_i$.

We are going to prove that all lower degrees in the first segment 
are distinct, as well as the upper degrees in the last segment.
\begin{lemma} \label{u<d}
a) If $l_i=l_{i+1}$ then $u_i\leq l_i$ and $-U({\bf x})\subseteq
L({\bf x})$ for any vertex~${\bf x}$ of color $i$. b) If
$u_i=u_{i-1}$ then $l_i\leq u_i$ and $-L({\bf x})\subseteq U({\bf
x})$ for any vertex~${\bf x}$ of color $i$.
\end{lemma}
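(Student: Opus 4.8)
The plan is to reduce part b) to part a) by passing to the inverse coloring, and then to prove part a) directly from Lemma \ref{incl} via a cardinality argument.

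First I would observe that b) is nothing more than a) read in the inverse coloring. Let $\psi$ be the coloring obtained from $\varphi$ by reversing the order of the colors; it is distance regular and equivalent to $\varphi$, and by (\ref{L=U}) the lower and upper directions are interchanged, $L_\varphi({\bf x})=U_\psi({\bf x})$ and $U_\varphi({\bf x})=L_\psi({\bf x})$. A vertex of color $i$ in $\varphi$ has color $j=k+1-i$ in $\psi$, so the lower degree at $i$ in $\varphi$ equals the upper degree at $j$ in $\psi$, and vice versa. Under this correspondence the hypothesis $u_i=u_{i-1}$ of b) becomes the hypothesis $l_j=l_{j+1}$ of a) for $\psi$, while the conclusion of a) for $\psi$ (namely $l_j\le u_j$ and $-U_\psi\subseteq L_\psi$) translates back verbatim into the conclusion of b) for $\varphi$. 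Hence it suffices to establish a).

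For a), I would fix a vertex ${\bf x}$ of color $i$ and take an arbitrary upper direction $d\in U({\bf x})$; then ${\bf y}={\bf x}+d$ has color $i+1$. Applying Lemma \ref{incl} to the adjacent pair ${\bf x},{\bf y}$ gives $L({\bf x})\subseteq L({\bf y})$, and the hypothesis $l_i=l_{i+1}$ says $|L({\bf x})|=|L({\bf y})|$, so the inclusion is forced to be an equality $L({\bf x})=L({\bf y})$. Viewed now from ${\bf y}$, the vertex ${\bf x}={\bf y}-d$ has color $i=(i+1)-1$, so $-d$ is a lower direction of ${\bf y}$, that is $-d\in L({\bf y})=L({\bf x})$. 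Since $d\in U({\bf x})$ was arbitrary, this yields $-U({\bf x})\subseteq L({\bf x})$; taking cardinalities and using $|-U({\bf x})|=|U({\bf x})|$ gives $u_i\le l_i$.

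The argument is short precisely because Lemma \ref{incl} carries the structural content; the only genuine inferential step is recognizing that equal cardinalities upgrade the inclusion $L({\bf x})\subseteq L({\bf y})$ to the equality $L({\bf x})=L({\bf y})$, after which the reversed direction $-d$ lands in $L({\bf x})$ automatically. The main point to get right is the bookkeeping in the inverse-coloring reduction for b): one must apply the index shift $i\mapsto k+1-i$ together with the $L\leftrightarrow U$ swap consistently, so that the hypotheses and both conclusions correspond exactly.
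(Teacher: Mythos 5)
Your proposal is correct and follows essentially the same route as the paper: part a) by taking $d\in U({\bf x})$, upgrading the inclusion $L({\bf x})\subseteq L({\bf x}+d)$ from Lemma \ref{incl} to an equality via $l_i=l_{i+1}$, and concluding $-d\in L({\bf x})$; part b) by passing to the inverse coloring via (\ref{L=U}). You are merely more explicit than the paper about the cardinality step and the index bookkeeping, which is a clarification rather than a departure.
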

\begin{proof}
Let $d\in U({\bf x})$. Then the vertex ${\bf x}+d$ has the color
$i+1$. It follows from Lemma \ref{incl} that
$L({\bf x})=L({\bf x}+d)$. Hence, $-l\in L({\bf x}+d)=L({\bf x})$,
and we get a).
Now (\ref{L=U}) gives b).
\end{proof}

The next theorem is a simple consequence of Lemma \ref{u<d}:

\begin{theorem} \label{non-eq-ty}
For any $i<I_1$, it holds $l_i\neq l_{i+1}$;  
for any $i>I_2$, it holds $u_i\neq u_{i-1}$.
\end{theorem}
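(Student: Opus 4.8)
The plan is to prove the contrapositive of each implication in Theorem~\ref{non-eq-ty} by invoking Lemma~\ref{u<d} together with the segment structure that Theorem~\ref{monot} induces on the colors. Recall that the index $I_1$ marks the end of the first segment, where $l_i < u_i$ holds for every $i \in \{1,\ldots,I_1\}$. So I would argue as follows for the first claim: suppose for contradiction that $l_i = l_{i+1}$ for some $i < I_1$. By part~(a) of Lemma~\ref{u<d}, the equality $l_i = l_{i+1}$ forces $u_i \leq l_i$. But $i < I_1$ means $i$ lies in the first segment, where by definition $l_i < u_i$, i.e. $u_i > l_i$. These two inequalities are contradictory, so no such $i$ can exist, and $l_i \neq l_{i+1}$ for all $i < I_1$.

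The second claim is entirely symmetric, using part~(b) of Lemma~\ref{u<d}. Suppose $u_i = u_{i-1}$ for some $i > I_2$. By part~(b), this equality forces $l_i \leq u_i$. But $i > I_2$ places $i$ in the third segment, where by definition $l_i > u_i$, i.e. $u_i < l_i$. Again these are contradictory, so $u_i \neq u_{i-1}$ for every $i > I_2$. I would present both halves together, noting that the second follows from the first by the inverse-order equivalence (\ref{L=U}), exactly as Lemma~\ref{u<d}(b) was itself deduced from part~(a); this keeps the argument economical and highlights the duality.

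The only subtle point worth checking carefully is the alignment of index ranges between the hypothesis of Lemma~\ref{u<d} and the segment definitions, since that is where an off-by-one slip could creep in. Concretely, for the first claim the relevant comparison is $l_i < u_i$, which must hold at the index $i$ where Lemma~\ref{u<d}(a) delivers $u_i \leq l_i$; the condition $i < I_1$ guarantees $i$ (and indeed $i+1$) sit strictly inside the first segment, so the strict inequality $l_i < u_i$ is available precisely where needed. Similarly $i > I_2$ ensures the strict inequality $l_i > u_i$ of the third segment applies at the index supplied by Lemma~\ref{u<d}(b). I do not anticipate any genuine obstacle here: once Lemma~\ref{u<d} is in hand, the theorem is a direct contradiction argument, and the main care is simply bookkeeping on the segment boundaries $I_1$ and $I_2$.
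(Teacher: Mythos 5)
Your proof is correct and is essentially the paper's own argument: the paper states the theorem as "a simple consequence of Lemma \ref{u<d}," namely the same contradiction between $u_i\leq l_i$ (from the lemma) and $l_i<u_i$ (from membership in the first segment), with the dual case handled symmetrically. No meaningful difference in approach.
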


In conclusion of this section, we state the following.
\begin{corol} \label{coinctriples}
Let $i,j\in\{2,\ldots,k-1\}, \ i\neq j,$ and 
let the degree triples of colors $i$ and $j$ coincide. 
Then the degree triples of all colors between $i$ and $j$ have the form $(a,2b,a)$.
\end{corol}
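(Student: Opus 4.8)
The plan is to assume without loss of generality that $i<j$ and to combine the two monotonic sequences of Theorem \ref{monot} with the strict distinctness supplied by Theorem \ref{non-eq-ty}. Since the degree triples of colors $i$ and $j$ coincide, in particular $l_i=l_j$ and $u_i=u_j$. By Theorem \ref{monot} the lower degrees are non-increasing and the upper degrees are non-decreasing, so over the block of indices $i\le m\le j$ we have the chains $l_i\ge l_{i+1}\ge\cdots\ge l_j$ and $u_i\le u_{i+1}\le\cdots\le u_j$, each pinched between equal endpoints. Hence both chains are constant: there are numbers $a$ and $c$ with $l_m=a$ and $u_m=c$ for every $m$ with $i\le m\le j$.

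It then suffices to show $a=c$, for then each such triple equals $(a,2n-2a,a)$, which has the required form $(a,2b,a)$ with $b=n-a$ (note $k_m$ is forced by $l_m+k_m+u_m=2n$). Suppose instead that $a<c$. Then $l_m<u_m$ for all $m$ in the block, so every color of the block lies in the first segment $\{1,\ldots,I_1\}$; in particular $j\le I_1$, and since $i<j$ this yields $i<I_1$. But Theorem \ref{non-eq-ty} then forces $l_i\ne l_{i+1}$, contradicting the constancy $l_i=l_{i+1}=a$ established above.

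The case $a>c$ is symmetric: now $l_m>u_m$ throughout the block, so the whole block lies in the third segment $\{I_2,\ldots,k\}$, giving $i\ge I_2$ and therefore $j>I_2$. Theorem \ref{non-eq-ty} then yields $u_{j-1}\ne u_j$, contradicting $u_{j-1}=u_j=c$. Thus $a=c$, and every color strictly between $i$ and $j$ carries the triple $(a,2(n-a),a)$.

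I expect no serious obstacle, since the statement is essentially a corollary of Theorems \ref{monot} and \ref{non-eq-ty}. The one point requiring care is the bookkeeping at the segment boundaries: one must check that $i<j$ really pushes $i$ strictly below $I_1$ in the first case (and $j$ strictly above $I_2$ in the second), so that the distinctness hypotheses of Theorem \ref{non-eq-ty}, which exclude the boundary indices $I_1$ and $I_2$, genuinely apply. Once the relevant indices are placed inside the correct open segment, the contradiction is immediate.
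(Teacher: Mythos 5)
Your proof is correct, and since the paper states this corollary without an explicit proof, your derivation --- monotonicity from Theorem \ref{monot} pinches $l_m$ and $u_m$ to constants $a$ and $c$ on the block, and Theorem \ref{non-eq-ty} then rules out $a\neq c$ by placing the block strictly inside the first or third segment --- is exactly the intended argument (equivalently, one can apply Lemma \ref{u<d} directly to $l_i=l_{i+1}$ and $u_j=u_{j-1}$ to get $u_i\le l_i$ and $l_j\le u_j$, hence $a=c$). Your boundary bookkeeping ($i<j\le I_1$ gives $i<I_1$; $I_2\le i<j$ gives $j>I_2$) is the one point needing care and you handle it correctly; note also that the pinching step is insensitive to the direction of the inequalities in Theorem \ref{monot}, whose printed orientation is inconsistent with Lemma \ref{incl} and the example at the end of Section \ref{main}.
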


%%%%%%%%%%%%%%%%%%%%%%%%%%%%%%%%%%%%%%%%%%%%%%%%%%%%%%
\section{Colors with the same degree triples} \label{s-triples}

Theorem \ref{non-eq-ty} establishes that 
only the degree triple of form $(a,2b,a)$ can be repeated.
Everywhere until the end of this section we suppose 
$k\geq 4$ and $I_2 > I_1+2$; i.e., there exist repeated degree triples.

\begin{lemma} \label{opp}
Let the colors $i$ and $i+1$ have the same degree triples. Then for
any two adjacent vertices ${\bf x}$ and ${\bf y}$ of colors $i$ and $i+1$,
respectively, we have
$$L({\bf x})= L({\bf y})= -U({\bf x})= -U({\bf y}) ,$$
$$I({\bf x})= I({\bf y})= -I({\bf x})= -I({\bf y}) .$$
\end{lemma}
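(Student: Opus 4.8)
The plan is to upgrade the one-sided inclusions supplied by Lemmas \ref{incl} and \ref{u<d} into genuine equalities by comparing cardinalities. By Theorem \ref{non-eq-ty} and the remark opening this section, the common degree triple of the colors $i$ and $i+1$ has the form $(a,2b,a)$; in particular $l_i=u_i=l_{i+1}=u_{i+1}=a$ and $k_i=k_{i+1}=2b$. Throughout I would keep in mind that, for any vertex ${\bf z}$, the sets $L({\bf z}), I({\bf z}), U({\bf z})$ are pairwise disjoint with union the negation-invariant set $\{\pm {\bf e^j}: j=1,\ldots,n\}$, and that their cardinalities are $l$, $k$, $u$ of the color of ${\bf z}$.

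First I would establish the top line. Lemma \ref{incl} gives $L({\bf x})\subseteq L({\bf y})$ and $U({\bf x})\supseteq U({\bf y})$; since $|L({\bf x})|=l_i=a=l_{i+1}=|L({\bf y})|$ and $|U({\bf x})|=u_i=a=u_{i+1}=|U({\bf y})|$, both inclusions are forced to be equalities, so $L({\bf x})=L({\bf y})$ and $U({\bf x})=U({\bf y})$. Next, because $l_i=l_{i+1}$, part a) of Lemma \ref{u<d} applies and yields $-U({\bf x})\subseteq L({\bf x})$; as $|U({\bf x})|=u_i=a=l_i=|L({\bf x})|$, this inclusion is likewise an equality, giving $-U({\bf x})=L({\bf x})$. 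Chaining these three identities produces $L({\bf x})=L({\bf y})=-U({\bf x})=-U({\bf y})$, which is the first displayed line.

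It then remains to treat the inner directions. Since $I({\bf z})$ is exactly the complement $\{\pm{\bf e^j}\}\setminus(L({\bf z})\cup U({\bf z}))$ and we have just shown $L({\bf x})=L({\bf y})$ and $U({\bf x})=U({\bf y})$, it follows at once that $I({\bf x})=I({\bf y})$. For the self-symmetry I would apply the negation $d\mapsto -d$ to the partition $\{\pm{\bf e^j}\}=L({\bf x})\cup I({\bf x})\cup U({\bf x})$. Because $\{\pm{\bf e^j}\}$ is invariant under negation and, by the first line, $-L({\bf x})=U({\bf x})$ and $-U({\bf x})=L({\bf x})$, the negated partition becomes $\{\pm{\bf e^j}\}=U({\bf x})\cup(-I({\bf x}))\cup L({\bf x})$. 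Comparing the two partitions of the same set, the outer parts together cover $L({\bf x})\cup U({\bf x})$ in both cases, so the middle parts must coincide: $-I({\bf x})=I({\bf x})$. Combining with $I({\bf x})=I({\bf y})$ gives $I({\bf x})=I({\bf y})=-I({\bf x})=-I({\bf y})$, completing the second line.

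No step poses a real obstacle; the single pivotal observation is that the hypothesis of equal degree triples forces the relevant cardinalities to agree, so the inclusions of Lemmas \ref{incl} and \ref{u<d} collapse to equalities. The only point demanding a little care is the final one, where one must combine the negation-invariance of $\{\pm{\bf e^j}\}$ with the identity $-L({\bf x})=U({\bf x})$ to recognize $-I({\bf x})$ as the complement of $L({\bf x})\cup U({\bf x})$, hence as $I({\bf x})$ itself.
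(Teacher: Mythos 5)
Your proof is correct and takes essentially the same approach as the paper: both upgrade the inclusions of Lemma \ref{incl} to equalities by comparing cardinalities (using that the repeated triple satisfies $l_i=u_i$), and both obtain the claims about $I$ by complementation in the negation-invariant set $\{\pm{\bf e^j}\}$. The only difference is that you cite Lemma \ref{u<d}(a) for the containment $-U({\bf x})\subseteq L({\bf x})$, whereas the paper re-derives that containment by chasing a direction $d\in L({\bf x})$ around with Lemma \ref{incl}; this is a harmless streamlining.
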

\begin{proof}
First note that $L({\bf x})= L({\bf y})$ 
and $U({\bf x})= U({\bf y})$ by Lemma \ref{incl}. 
Fix the direction $d\in L({\bf x})$.
Then $d\in L({\bf y})$ by Lemma \ref{incl}, 
this means that $-d\in U({\bf y}+d)$.
Using Lemma~\ref{incl} with our condition $u_i=u_{i+1}$,
we get $-d\in U({\bf y})$. 
Again by Lemma~\ref{incl}, we obtain $-d\in U({\bf x})$.
The second equality follows.
\end{proof}
We emphasize that according to the Lemma \ref{opp}, 
two opposite directions belong or do not belong 
to the set of inner directions simultaneously.

Let us describe the set of vertices with the fixed color $i$. For any set of vertices
$V\subseteq\mathbb{Z}^n$ denote dy $G(V)$ the subgraph of $\mathbb{Z}^n$
generated by $V$.
\begin{lemma} \label{Ci-opp}
Let the colors $i$ and $i+1$ have the same degree triples. Then for
any two vertices ${\bf x}$ and ${\bf y}$ of a connected component
of the graph $G(C_i\bigcup C_{i+1})$, we have
$$L({\bf x})= L({\bf y})= -U({\bf x})= -U({\bf y}) ,$$
$$I({\bf x})= I({\bf y})= -I({\bf x})= -I({\bf y}) .$$
\end{lemma}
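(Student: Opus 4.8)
The plan is to prove the two displayed identities first across a single \emph{edge} of $G(C_i\cup C_{i+1})$ and then to propagate them along paths. Any two vertices ${\bf x},{\bf y}$ of one connected component are joined by a path whose consecutive vertices are adjacent in $\mathbb{Z}^n$ and lie in $C_i\cup C_{i+1}$; so once the identities are known for every edge, transitivity of equality upgrades them to the whole component. Before treating edges I would record two preliminary facts. By Theorem~\ref{non-eq-ty} and Corollary~\ref{coinctriples} the common degree triple of colors $i$ and $i+1$ is of the form $(a,2b,a)$, and $a\geq 1$ because a vertex of color $i+1$ must have a neighbor of color $i$ (so $l_{i+1}=a\geq 1$) and symmetrically $u_i=a\geq 1$. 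Hence every vertex of $C_i\cup C_{i+1}$ has an in-band neighbor of the adjacent color, so Lemma~\ref{opp} applies to it and furnishes the per-vertex symmetries
$$L({\bf z})=-U({\bf z}), \qquad I({\bf z})=-I({\bf z}) \qquad ({\bf z}\in C_i\cup C_{i+1}).$$
Because of these symmetries it suffices to establish only $L({\bf x})=L({\bf y})$ across each edge: then $U({\bf x})=-L({\bf x})=-L({\bf y})=U({\bf y})$, and $I({\bf x})=I({\bf y})$ follows since $L$, $I$, $U$ partition the $2n$ unit directions.

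For a \emph{bichromatic} edge, with endpoints of colors $i$ and $i+1$, the equality $L({\bf x})=L({\bf y})$ is exactly Lemma~\ref{opp}, so nothing remains. The main obstacle is a \emph{monochromatic} edge, to which Lemma~\ref{opp} does not apply directly; I would resolve it by a reflection device. Consider two adjacent vertices ${\bf x},{\bf x'}$ of color $i$ with $d={\bf x'}-{\bf x}\in I({\bf x})$, and fix any $\ell\in L({\bf x})$. By the symmetry $L({\bf x})=-U({\bf x})$ the vertex ${\bf x}-\ell$ has color $i+1$, so $\{{\bf x},{\bf x}-\ell\}$ is a bichromatic edge and Lemma~\ref{opp} gives $I({\bf x})=I({\bf x}-\ell)$. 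Since $d\in I({\bf x})=I({\bf x}-\ell)$, the vertex $({\bf x}-\ell)+d={\bf x'}-\ell$ again has color $i+1$; thus $-\ell\in U({\bf x'})$, and the symmetry $L({\bf x'})=-U({\bf x'})$ yields $\ell\in L({\bf x'})$. This proves $L({\bf x})\subseteq L({\bf x'})$, and exchanging ${\bf x}$ with ${\bf x'}$ (replacing $d$ by $-d$) gives the reverse inclusion, so $L({\bf x})=L({\bf x'})$.

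The monochromatic edges inside $C_{i+1}$ are handled in the same spirit, and in fact more directly: for adjacent ${\bf x},{\bf x'}$ of color $i+1$ and $\ell\in L({\bf x})$, the lower neighbor ${\bf x}+\ell$ already has color $i$, so $\{{\bf x},{\bf x}+\ell\}$ is itself a bichromatic edge; Lemma~\ref{opp} gives $I({\bf x})=I({\bf x}+\ell)$, pushing $d$ through shows ${\bf x'}+\ell$ has color $i$, and hence $\ell\in L({\bf x'})$. Combining the bichromatic case, the two monochromatic cases, and transitivity along a path yields both displayed identities for every pair of vertices of a connected component. The one delicate point is exactly that Lemma~\ref{opp} concerns adjacencies between \emph{different} colors; the reflection $\ell\mapsto-\ell$ (or, for color $i+1$, the use of the in-band lower neighbor) is the device that turns a same-color step into opposite-color steps to which Lemma~\ref{opp} can be applied.
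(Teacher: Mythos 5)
Your proof is correct, and it follows the paper's overall skeleton (reduce to single edges of $G(C_i\cup C_{i+1})$, settle bichromatic edges by Lemma~\ref{opp}, then propagate along paths), but the core step for a monochromatic edge is handled by a genuinely different mechanism. The paper proves $I({\bf x})=I({\bf y})$ for two adjacent vertices of color $i$ \emph{by contradiction}: assuming some $d\in I({\bf x})$ lies in $U({\bf y})$, it reconstructs the colors of a small local configuration (the two figures) until two adjacent vertices are forced to receive colors $i$ and $i+2$, violating tridiagonality of the parameter matrix; the first displayed equation is then deduced from the second. You instead argue \emph{directly} on $L$: using the per-vertex symmetry $L=-U$ supplied by Lemma~\ref{opp} (legitimately available at every vertex of the band since $a=l_{i+1}=u_i\geq 1$), you reflect a lower direction $\ell$ into an upper one, detour through the resulting color-$(i+1)$ neighbor where Lemma~\ref{opp} transports $I$, and reflect back to conclude $\ell\in L({\bf x}')$; the $I$-equation then falls out because $L,I,U$ partition the directions. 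Your route avoids the paper's least formal ingredient (the figure-based color reconstruction) at the cost of one extra observation ($a\geq 1$, which you justify correctly via distance regularity), and it handles the $C_{i+1}$-monochromatic edges even more directly; both arguments ultimately rest on the same two facts, namely Lemma~\ref{opp} and the forbidden $i$-to-$(i+2)$ adjacency implicit in tridiagonality.
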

\begin{proof}
It is sufficient to prove the equalities for two adjacent vertices 
of the same color $i$ or $i+1$, for example for the color $i$, 
and then apply Lemma \ref{opp}.
Let us show that the second equation holds for every two adjacent
vertices ${\bf x}$ and ${\bf y}$ of color $i$. Suppose that for
some $d\in I({\bf x})$ this direction is not inner for ${\bf y}$;
i.e., without loss of generality, $d\in U({\bf x})$. 
Fig. 1 with the notations of vertices and
Fig. 2 with theirs colors illustrate our reasoning. We reconstruct
the colors in alphabetical order of vertices in Fig. 1 and get the
coloring as in Fig. 2.

\begin{picture}(0,80)(-10,-10)%3
\multiput(10,0)(25,0){5}{\line(0,1){65}}
\multiput(5,10)(0,15){4}{\line(1,0){110}}
\put(10,25){\makebox(25,15)[c]{$a$}}
\put(35,10){\makebox(25,15)[c]{$c$}}
\put(35,25){\makebox(25,15)[c]{${\bf x}$}}
\put(35,40){\makebox(25,15)[c]{${\bf x}+d$}}
\put(60,10){\makebox(25,15)[c]{$d$}}
\put(60,25){\makebox(25,15)[c]{${\bf y}$}}
\put(60,40){\makebox(25,15)[c]{${\bf y}+d$}}
\put(85,25){\makebox(25,15)[c]{$b$}}
\put(85,40){\makebox(25,15)[c]{$e$}}
\put(35,-15){\makebox(45,15)[c]{Fig. 1}}
\end{picture}
\begin{picture}(0,80)(-150,-10)%3
\multiput(10,0)(25,0){5}{\line(0,1){65}}
\multiput(5,10)(0,15){4}{\line(1,0){110}}
\put(10,25){\makebox(25,15)[c]{$i$}}
\put(35,10){\makebox(25,15)[c]{$i$}}
\put(35,25){\makebox(25,15)[c]{$i$}}
\put(35,40){\makebox(25,15)[c]{$i$}}
\put(60,10){\makebox(25,15)[c]{$i-1$}}
\put(60,25){\makebox(25,15)[c]{$i$}}
\put(60,40){\makebox(25,15)[c]{$i+1$}}
\put(85,25){\makebox(25,15)[c]{$i$}}
\put(85,40){\makebox(25,15)[c]{$i+2$}}
\put(35,-15){\makebox(45,15)[c]{Fig. 2}}
\end{picture}

\noindent 
Colors $i$ and $i+2$ of the adjacent vertices $ b$ and $e$ 
come in to collision. The first equation follows 
from the second and Lemma \ref{opp}.
\end{proof}

Let us Clarify the structure of connected components of the
graph $G(C_i\bigcup C_{i+1})$.
Let $c\in\mathbb{Z}$ and $\delta=(\delta_1,\delta_2,\ldots,\delta_n)\in\{0,1,-1\}^n$.
Denote by $M(\delta,c)$ the hyperplane
$$M(\delta,c) = \left\{{\bf x} = (x_1,x_2,\ldots,x_n)\in \mathbb{Z}^n \ : \
\delta_1x_1+\delta_2x_2+\ldots+\delta_nx_n = c\right\} .$$

\begin{lemma} \label{subsp}
Let the colors $i$ and $i+1$ have the same degree triples and let $G'$
be a connected component of $G(C_i\bigcup C_{i+1})$. Then there
exist integer $c$ and $(0,1,-1)$-valued vector $\delta=(\delta_1,\ldots,\delta_n)$ such that
$$C_i\cap G'=M(\delta,c), \ \ \ \ \ C_{i+1}\cap G'=M(\delta,c+1).$$
\end{lemma}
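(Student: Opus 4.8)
The plan is to distill from Lemma~\ref{Ci-opp} a single linear functional whose two adjacent level sets are exactly the two colour classes inside $G'$.

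First I would record the structure forced by Lemma~\ref{Ci-opp}: all vertices of $G'$, of either colour, share one common triple $L,I,U$ with $U=-L$ and $I=-I$. Since $I=-I$ and $I\subseteq\{\pm{\bf e^j}\}$, there is a set $T\subseteq\{1,\dots,n\}$ with $I=\{\pm{\bf e^j}:j\in T\}$, and for each $j\notin T$ exactly one of ${\bf e^j},-{\bf e^j}$ lies in $U=-L$. This lets me define $\delta\in\{0,1,-1\}^n$ by $\delta_j=0$ for $j\in T$, $\delta_j=1$ when ${\bf e^j}\in U$ and $\delta_j=-1$ when ${\bf e^j}\in L$; equivalently $U=\{\delta_j{\bf e^j}:j\notin T\}$. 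With $f({\bf x})=\delta_1x_1+\dots+\delta_nx_n$ one then checks immediately that $f$ increases by $1$ along every upper edge, decreases by $1$ along every lower edge, and is constant along every inner edge.

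The forward inclusion is now a potential argument. Setting $g({\bf x})=f({\bf x})$ on $C_i\cap G'$ and $g({\bf x})=f({\bf x})-1$ on $C_{i+1}\cap G'$, the three edge types inside $G'$ (an edge within a colour class is inner; an edge between the classes is an upper edge at its colour-$i$ endpoint) all leave $g$ unchanged. As $G'$ is connected, $g\equiv c$ for some integer $c$, so $C_i\cap G'\subseteq M(\delta,c)$ and $C_{i+1}\cap G'\subseteq M(\delta,c+1)$.

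The reverse inclusions are the crux, and the main obstacle is showing that a single colour fills an entire hyperplane rather than a proper sublattice of it. Fix ${\bf x_0}\in C_i\cap G'$ (such a vertex exists and, since $a=u_i=l_{i+1}\ge1$, so does a colour-$(i+1)$ vertex of $G'$). I claim $C_i\cap G'$ is invariant under the whole kernel lattice $H=\{v\in\mathbb{Z}^n:\delta\cdot v=0\}$; with ${\bf x_0}\in M(\delta,c)$ this gives $M(\delta,c)={\bf x_0}+H\subseteq C_i\cap G'$ and hence equality. Invariance along ${\bf e^j}$, $j\in T$, is immediate, as such ${\bf e^j}\in I$ keeps us in colour $i$ and in $G'$. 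For the remaining generators, which involve two nonzero coordinates, I use a two-step \emph{zigzag}: from a colour-$i$ vertex step along a direction of $U$ into a colour-$(i+1)$ vertex of $G'$, then step along a direction of its set $L$ back into colour $i$; each endpoint stays in $G'$ by adjacency, and the net displacement is $u-u'$ with $u,u'\in U$, which lies in $H$. A short computation (using $U=\{\delta_j{\bf e^j}:j\notin T\}$) shows that the displacements $u-u'$ together with $\{{\bf e^j}:j\in T\}$ generate all of $H$, which yields the claimed invariance and thus $C_i\cap G'=M(\delta,c)$. Running the identical reachability argument from a colour-$(i+1)$ vertex (whose inner set is again $I$) gives $C_{i+1}\cap G'=M(\delta,c+1)$, completing the proof.
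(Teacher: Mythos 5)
Your proof is correct, and it is in fact more complete than the one printed in the paper. The skeleton is the same: both you and the authors invoke Lemma~\ref{Ci-opp} to get a single triple $(L,I,U)$ with $U=-L$, $I=-I$ valid at every vertex of $G'$, read off $\delta$ from it, and obtain the inclusions $C_i\cap G'\subseteq M(\delta,c)$, $C_{i+1}\cap G'\subseteq M(\delta,c+1)$ by tracking the value of $\delta\cdot{\bf x}$ along a path in $G'$ (the paper phrases this as induction on the distance to a base vertex ${\bf v}$; your potential function $g$ is the same computation). Where you genuinely go further is the reverse inclusion $M(\delta,c)\subseteq C_i\cap G'$: the paper's proof establishes only that each colour class sits inside its hyperplane and then asserts equality, while you actually show the class fills the hyperplane by proving invariance of $C_i\cap G'$ under the kernel lattice $H$ --- directly along the inner directions ${\bf e^j}\in I$, and along the differences $u-u'$ with $u,u'\in U$ via the up--down zigzag through a colour-$(i+1)$ vertex --- together with the standard observation that these displacements generate $H$ (and $|U|=a\geq 1$, since colour $i+1$ must have a lower neighbour, so the zigzag is always available). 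This is not a cosmetic addition: the containment $M(\delta,c+\varepsilon)\subseteq C_{i+\varepsilon}$ is exactly what the proof of Theorem~\ref{reduc} uses afterwards, so your argument closes a step the paper leaves implicit.
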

\begin{proof} 
Let the repeated degree triple be $(t,2n-2t,t)$.
Let ${\bf v}=(v_1,v_2,\ldots,v_n)\in G'$ be of color $i$. 
Without loss of generality, we suppose
$$U({\bf v}) = \{ {\bf e^1},\ldots,{\bf e^s},-{\bf e^{s+1}},\ldots,-{\bf e^t}\} ,$$
$$L({\bf v}) = \{ -{\bf e^1},\ldots,-{\bf e^s},{\bf e^{s+1}},\ldots,{\bf e^t}\} ,$$
$$I({\bf v}) = \{ \pm {\bf e^{t+1}},\ldots,\pm {\bf e^n}\} .$$
Then define the following constants: \\
$\delta_1 =\ldots=\delta_s=1,$ \\
\ $\delta_{s+1}=\ldots=\delta_t=-1,$ \ \\
$\delta_{t+1}=\ldots=\delta_n=0,$ \ \\
$c=v_1+\ldots+v_s-v_{s+1}-\ldots-v_t.$\\
All neighbors of the vertices of $G'$ belong to the set
$G' \cup M(\delta,c-1)\cup M(\delta,c+2)$.
For an arbitrary vertex ${\bf x}\in G'$, 
one can easily check by induction on distance between ${\bf x}$ and ${\bf v}$ 
that ${\bf x}\in M(\delta,c)$ in case ${\bf x}\in C_i$ 
and  ${\bf x}\in M(\delta,c+1)$ in case ${\bf x}\in C_{i+1}$.
\end{proof}

\begin{theorem} \label{reduc}
Let $\varphi: \mathbb{Z}^n\longrightarrow\{1,2,\ldots,k\}$ be the
distance regular coloring, and let for some $i,j\in \{ 2,\ldots k-2\},$ the
colors $i$ and $j$ have the same degree triples. Then the  degree
triples coincide for all colors from 2 to $k-1$ and the coloring is reducible.
\end{theorem}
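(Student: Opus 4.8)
The plan is to first upgrade the hypothesis to a pair of \emph{consecutive} colors with equal triples, then to extract a single global direction from Lemma~\ref{subsp}, and finally to propagate the resulting layered structure across the whole grid by an induction driven by the monotonicity of Theorem~\ref{monot}. First I would note that, say $i<j$, the hypothesis together with the chains $l_i\geq l_{i+1}\geq\ldots\geq l_j$ and $u_i\leq u_{i+1}\leq\ldots\leq u_j$ from Theorem~\ref{monot} and the equalities $l_i=l_j$, $u_i=u_j$ forces $l_m$ and $u_m$ to be constant for $m\in\{i,\ldots,j\}$; hence all of these colors carry one common triple. Applying Lemma~\ref{u<d} to a consecutive pair inside this block (from $l_i=l_{i+1}$ and $u_i=u_{i+1}$) gives $u_i\leq l_i$ and $l_i\leq u_i$, so the common triple has the balanced form $(t,2n-2t,t)$ with $t=l_i=u_i$, in agreement with Corollary~\ref{coinctriples}. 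In particular colors $i$ and $i+1$ are a consecutive equal-triple pair, which is exactly the setting of Lemmas~\ref{opp}--\ref{subsp}.

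Second, fixing a connected component $G'$ of $G(C_i\cup C_{i+1})$, Lemma~\ref{subsp} supplies a vector $\delta\in\{0,1,-1\}^n$ and an integer $c$ with $C_i\cap G'=M(\delta,c)$ and $C_{i+1}\cap G'=M(\delta,c+1)$; write $H_m:=M(\delta,c+m)$ and $\ell({\bf x})=\delta_1x_1+\ldots+\delta_nx_n$. Thus the \emph{entire} hyperplanes $H_0\subseteq C_i$ and $H_1\subseteq C_{i+1}$ are monochromatic. I would then show that the two neighbouring hyperplanes $H_{-1}$ and $H_2$ are again full and monochromatic, of colors $i-1$ and $i+2$: a point ${\bf y}\in H_{-1}$ is adjacent, via a direction with $\ell=+1$, to $H_0\subseteq C_i$, so $\varphi({\bf y})\in\{i-1,i,i+1\}$, and if it were $i$ or $i+1$ then ${\bf y}$ would lie in the same component $G'=H_0\cup H_1$, contradicting $\ell({\bf y})=c-1$. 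Hence $H_{-1}\subseteq C_{i-1}$, and symmetrically $H_2\subseteq C_{i+2}$.

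Third comes the engine of the argument, a monotonicity squeeze. At a point ${\bf y}\in H_{-1}$ the $t$ directions with $\ell=+1$ point into $C_i$, while the $2(n-t)$ directions with $\ell=0$ stay inside the full hyperplane $H_{-1}\subseteq C_{i-1}$; this forces $u_{i-1}\geq t$ and $k_{i-1}\geq 2(n-t)$, whence $l_{i-1}\leq t$. On the other hand Theorem~\ref{monot} gives $l_{i-1}\geq l_i=t$, so $l_{i-1}=t$ and color $i-1$ again has the triple $(t,2n-2t,t)$. Now colors $i-1,i$ form a consecutive equal-triple pair whose component is exactly $H_{-1}\cup H_0$ with the same $\delta$, so Lemma~\ref{subsp} and the containment argument above yield $H_{-2}\subseteq C_{i-2}$. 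Iterating downward and, symmetrically, upward, I conclude that every color in $\{2,\ldots,k-1\}$ carries the triple $(t,2n-2t,t)$ and occupies a full hyperplane $H_m$, the color decreasing (resp.\ increasing) by one each time $m$ decreases (resp.\ increases) by one; this is the first assertion of the theorem.

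Finally, for reducibility I would show that the whole coloring factors through $\ell$. The hyperplanes $\{M(\delta,m)\}_{m\in\mathbb{Z}}$ partition $\mathbb{Z}^n$, and the induction colors a contiguous block of levels running from the hyperplane of color $1$ (reached going down) to that of color $k$ (going up). At these two endpoint colors the triple changes, since $l_1=0$ and $u_k=0$, so the balanced propagation breaks and the layer pattern folds; a short separate analysis of colors $1$ and $k$, using $u_1+k_1=2n$ and the symmetry~(\ref{L=U}), shows that each adjacent hyperplane is still monochromatic, the level-to-color map folding precisely as in one of the three one-dimensional periods of Section~\ref{reducible}. Consequently every hyperplane $M(\delta,m)$ is monochromatic, $\varphi({\bf x})$ depends only on $\ell({\bf x})$, and setting $\varphi_1(m)$ to be that common color gives the form~(\ref{nfrom1}), so $\varphi$ is reducible. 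I expect the main obstacle to be exactly this endpoint/fold analysis, together with ruling out a second component of $G(C_i\cup C_{i+1})$ carrying a non-parallel normal $\delta'$; the latter is excluded because the single-normal stack already exhausts all of $\mathbb{Z}^n$, leaving no room for a differently oriented slab, while the former is controlled by the same monotonicity squeeze that drives the induction.
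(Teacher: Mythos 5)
Your proposal is correct and follows essentially the same route as the paper: reduce to a consecutive pair of equal triples, invoke Lemma~\ref{subsp} to get a pair of full monochromatic hyperplanes, propagate the layered structure outward hyperplane by hyperplane, and fold at colors $1$ and $k$ to recover one of the one-dimensional periods. Your ``monotonicity squeeze'' just spells out the induction step that the paper dismisses with ``one can easily check \dots because the coloring is distance regular,'' so it is a more detailed version of the same argument rather than a different one.
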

\begin{proof}
By Lemma \ref{subsp}, there exists $c\in\mathbb{Z}$ and
$\delta\in \{0,1,-1\}^n$ such that
$M(\delta,c+\varepsilon)\subseteq C_{i+\varepsilon}, \ \varepsilon\in\{0,1\}$.
Then for any $j\in \{1,\ldots,k\}$ by induction on $|j-i|$ one can easily check that
$M(\delta,c+j-i)\subseteq C_{j}$ because the coloring is distance regular. 
In particular, it holds 
$M(\delta,c-i+1)\subseteq C_1, \ \ M(\delta,c+k-i)\subseteq C_k$.
By distance regulatity of the coloring
$M(\delta,c-i)\subseteq C_1$ or $M(\delta,c+j-i)\subseteq C_2$ and $M(\delta,c+k-i)\subseteq C_k$ or $M(\delta,c+k-i)\subseteq C_{k-1}$.
Finally, we find that
$$\varphi(x_1,x_2,\ldots,x_n) =
\varphi'\left(\delta_1x_1+\delta_2x_2+\ldots+\delta_nx_n\right) ,$$
where $\varphi'$ is a distance regular $k$-coloring of $\mathbb{Z}^1$.
\end{proof}

%%%%%%%%%%%%%%%%%%%%%%%%%%%%%%%%%%%%%%%%%%%%%%%%%%%%%%%%%
\section{The number of colors} \label{main}
Let us see how to derive distance regular colorings 
of the $n$-dimensional rectangular grid from the colorings 
of the $2n$-dimensional Hamming space ${\bf F}^{2n} = \{ 0,1\}^{2n}$.
Let $g:{\bf F}^{2n}\rightarrow \mathbb{Z}^n$ be the Gray transform; 
i.e., for $\alpha=(\alpha_1,\ldots,\alpha_{2n})\in {\bf F}^{2n}$
$$g(\alpha_1,\ldots,\alpha_{2n}) = (g_0(\alpha_1,\alpha_2),\ldots,g_0(\alpha_{2n-1},\alpha_{2n})), $$
where $g_0(00)=0, \ g_0(01)=1, \ g_0(11)=2, \ g_0(10)=3$.
Let $\psi$ be a coloring of ${\bf F}^{2n}$. 
Define the coloring $\varphi$ of $\mathbb{Z}^n$ as follows:
%from the coloring $\psi$ of ${\bf F}^{2n}$ using Gray transform:
\begin{equation}\label{psi-g--1}
\varphi(x_1,\ldots,x_n) = \psi\left(g^{-1}(x_1,\ldots,x_n)\right), \ \
(x_1,\ldots x_n)\in \mathbb{Z}^n.
\end{equation}
\begin{lemma} \label{Gray}
Let $\psi$ be a perfect (distance regular) coloring of ${\bf F}^{2n}$. Then
the coloring $\varphi$ of $\mathbb{Z}^n$ defined by (\ref{psi-g--1}) is also perfect
(respectively, distance regular) with the same parameter matrix.
\end{lemma}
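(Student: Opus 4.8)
The plan is to understand the Gray transform $g$ as a graph map and track how adjacency and distance behave under it. The key structural fact is that $g_0$ sends the path $00 - 01 - 11 - 10$ (the edges of the Hamming cube on two bits in the cyclic Gray code order) to $0 - 1 - 2 - 3$, so each coordinate of $g$ turns a pair of $\mathbb{F}_2$-coordinates into one $\mathbb{Z}$-coordinate. First I would verify the local picture: if $\alpha \in {\bf F}^{2n}$ and $\beta$ is a neighbor of $\alpha$ in the Hamming cube (differing in exactly one bit, say bit $2j-1$ or $2j$), then $g(\alpha)$ and $g(\beta)$ differ in exactly the $j$-th $\mathbb{Z}^n$-coordinate, by $\pm 1$. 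I would confirm this is a bijection between the $2n$ Hamming-neighbors of $\alpha$ and the $2n$ grid-neighbors $g(\alpha)\pm{\bf e^j}$: flipping bit $2j-1$ versus bit $2j$ of a fixed pair produces the two opposite moves $\pm {\bf e^j}$ in $\mathbb{Z}^n$ (because $g_0$ realizes the $4$-cycle, so from any fixed value of the pair the two single-bit flips go to the two cyclically adjacent values, which $g_0$ maps to values differing by $\pm 1$ modulo the wrap $0\!-\!3$; the wrap edge $10-00$, i.e.\ $3 - 0$, is \emph{not} a grid edge and is exactly where one must be careful).

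This last point is the main obstacle: the cyclic Gray code on two bits is a $4$-cycle $0-1-2-3-0$, but the grid $\mathbb{Z}^1$ restricted to $\{0,1,2,3\}$ is a path $0-1-2-3$ missing the edge $3-0$. Since $\mathbb{Z}^n$ is the full integer grid, not $\{0,1,2,3\}^n$, I would handle this by regarding $g$ not as a map onto a bounded box but via the periodic structure: the relevant claim is that $g$ is a graph isomorphism from the Hamming graph ${\bf F}^{2n}$ (which is a Cartesian product of $4$-cycles) onto the appropriate quotient, or equivalently that the coloring $\varphi = \psi \circ g^{-1}$ is well defined and respects adjacency. Concretely, I would argue that a perfect coloring $\psi$ of ${\bf F}^{2n}$ is automatically periodic with period $2$ in each coordinate direction in the sense compatible with the $4$-cycle, so that its pushforward extends consistently to all of $\mathbb{Z}^n$; the clean way is to lift, observing that the universal cover of the $2n$-fold product of $4$-cycles is exactly $\mathbb{Z}^n$ with each $4$-cycle unrolled into a $\mathbb{Z}$-line, and $g$ is precisely the covering projection composed with the obvious identification.

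Granting the adjacency correspondence, the perfectness conclusion is then essentially formal. Fix a vertex ${\bf x} = g(\alpha)$ of color $\varphi({\bf x}) = \psi(\alpha) = p$. Its grid-neighbors are in bijection with the Hamming-neighbors of $\alpha$ under $g$, and corresponding vertices have equal colors since $\varphi = \psi \circ g^{-1}$. Hence the number of grid-neighbors of ${\bf x}$ of color $q$ equals the number of Hamming-neighbors of $\alpha$ of color $q$, which is the entry $\alpha_{pq}$ of the parameter matrix of $\psi$ and depends only on $p$. This shows $\varphi$ is perfect with the \emph{same} parameter matrix as $\psi$. For the distance-regular case, I would note that if $\psi$ is distance regular then in its standard order the parameter matrix is tridiagonal; since $\varphi$ inherits exactly this matrix, $\varphi$ is perfect with a tridiagonal parameter matrix in the same color order, which is precisely the definition of distance regular. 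I would therefore write the proof in two short steps: (i) establish the neighbor bijection via $g_0$ on each coordinate pair (the only genuinely technical verification, where the wrap-edge subtlety is dispatched by the covering-space viewpoint), and (ii) read off perfectness and the matrix equality directly, then invoke tridiagonality for the distance-regular claim.
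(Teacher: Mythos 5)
Your proposal is correct and follows essentially the same route as the paper, whose entire proof is the single observation that the Gray transform preserves adjacency; you simply supply the neighbor-bijection details and the mod-$4$/covering interpretation of $g^{-1}$ (needed to make $\varphi$ well defined on all of $\mathbb{Z}^n$ and to turn the wrap edge $10\!-\!00$ into a genuine grid edge) that the paper leaves implicit. The only blemishes are cosmetic: ${\bf F}^{2n}$ is the $n$-fold (not $2n$-fold) Cartesian product of $4$-cycles, the relevant map $\mathbb{Z}^n\to C_4^{\,n}$ is a covering but not the universal cover (which would be a tree), and the ``period $2$'' remark should read period $4$ in each $\mathbb{Z}$-coordinate --- none of which affects the substance of the argument.
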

\begin{proof}
As the Gray transform preserves the adjacency, the statement 
is straightforward from the definitions.
\end{proof}
We take as $\psi$ the distance coloring of ${\bf F}^{2n}$ 
with respect to the all-zero vertex: 
\begin{equation}\label{5raskr}
\psi({\bf x})=wt({\bf x})+1 , \ \ \  {\bf x}\in{\bf F}^{2n},
\end{equation}
where $wt({\bf x})=
\sum_{i=1}^{2n} x_i$ is the Hamming weight of the vertex ${\bf x}$.
It is distance regular with the parameters 
$l_i=i-1, \ \ u_i=2n-i+1,  \ \ k_i=0, \ \ i=1,2,\ldots,2n+1 .$ 
Then, by Lemma \ref{Gray},  the coloring $\varphi$ is also
distance regular with the same parameters.
The coloring $\varphi$ is not reducible because
its parameter matrix is not reducible.
Moreover, all variables  of $\varphi$ are essential and the coloring
is not cylindrical.

Finally, we can state the main theorem.

\begin{theorem} \label{irr}
For an arbitrary irreducible distance regular $k$-coloring of
$n$-dimensional rectangular grid, %$\mathbb{Z}^n$ then
it holds $k\leq 2n+1$.
An irreducible distance regular $(2n+1)$-coloring exists.
\end{theorem}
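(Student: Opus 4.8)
The plan is to establish the two claims of Theorem~\ref{irr} separately. The existence part is already essentially done in the text preceding the theorem: the distance coloring of ${\bf F}^{2n}$ with respect to the all-zero vertex, pulled back through the Gray transform via (\ref{psi-g--1}), yields by Lemma~\ref{Gray} a distance regular $(2n+1)$-coloring of $\mathbb{Z}^n$, and the remark that its parameter matrix is not reducible (and that all variables are essential) shows it is irreducible. So the substantive work is the upper bound $k\leq 2n+1$, which I would prove by contradiction: suppose $k\geq 2n+2$.

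First I would count repetitions of degree triples. By Theorem~\ref{non-eq-ty}, in the first segment $\{1,\ldots,I_1\}$ the lower degrees $l_i$ are strictly distinct, and in the last segment $\{I_2,\ldots,k\}$ the upper degrees $u_i$ are strictly distinct; moreover in the first segment $l_i<u_i$ and in the last $l_i>u_i$. The idea is that each degree triple $(l_i,k_i,u_i)$ is constrained by $l_i+k_i+u_i=2n$, so the degrees take values in a bounded range, and the strict monotonicity forces the ``non-symmetric'' colors (those in the first and third segments, where $l_i\neq u_i$) to be few in number. The middle segment $\{I_1+1,\ldots,I_2-1\}$ consists exactly of colors with $l_i=u_i$, i.e.\ triples of the form $(a,2n-2a,a)$.

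The key leverage is Theorem~\ref{reduc}: if two colors $i,j\in\{2,\ldots,k-2\}$ share the same degree triple, the coloring is reducible. Contrapositively, for an irreducible coloring all the interior colors $2,\ldots,k-2$ must have \emph{pairwise distinct} degree triples (at least up to the unavoidable boundary colors $1$ and $k$). I would then bound how many distinct triples are possible. Since each triple $(l,k_0,u)$ with $l+u\leq 2n$ and $l,u\geq 0$ is determined by the pair $(l,u)$, and the monotonicity of $\{l_i\}$ and $\{u_i\}$ along the standard order (Theorem~\ref{monot}) linearly orders the colors, the chain of distinct triples $1,2,\ldots,k$ corresponds to a strictly monotone lattice path in the $(l,u)$-plane from a boundary triple down to another. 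The length of such a path is at most $2n+1$, because the value $l_i$ can only increase through its range $0,1,\ldots,n$ and $u_i$ decrease through $n,\ldots,0$ while $l_i+u_i\leq 2n$, giving at most $2n+1$ admissible steps.

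The main obstacle I expect is handling the repeated symmetric triples $(a,2n-2a,a)$ in the middle segment correctly: Theorem~\ref{reduc} only forbids repeats among interior colors $\{2,\ldots,k-2\}$, so I must argue carefully that an irreducible coloring cannot hide extra colors in the middle layer. Here I would invoke the conclusion of Theorem~\ref{reduc} directly — any repetition of a triple in the interior forces reducibility — so irreducibility means the middle segment contributes at most one color of each symmetric type, and combining the distinctness in all three segments with the $l_i+u_i\leq 2n$ bound caps the total at $2n+1$. Assembling these counts and deriving the numerical contradiction from $k\geq 2n+2$ completes the bound.
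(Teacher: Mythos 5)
Your proposal is correct and follows essentially the same route as the paper: the Gray-code construction for existence, and for the upper bound the combination of Theorem~\ref{reduc} (an irreducible coloring has no repeated degree triples, so the middle segment $\{I_1+1,\ldots,I_2-1\}$ contributes at most one color) with Theorem~\ref{non-eq-ty}, Theorem~\ref{monot} and $l_i+k_i+u_i=2n$ (so each outer segment has at most $n$ colors, since $l_i<u_i$ forces $l_i<n$ there), giving $k\leq n+1+n$. The one point worth making explicit in your middle-segment step is that monotonicity forces $l_i=u_i$ to be \emph{constant} across that segment, so only a single symmetric type $(a,2n-2a,a)$ can occur and ``at most one color of each symmetric type'' really does mean at most one color in total.
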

\begin{proof} 
By Theorem \ref{reduc}, the coloring is irreducible and 
every two colors have different degree triples.
This means that $I_2-I_1\leq 2$.
Using Theorem \ref{non-eq-ty}, we get $I_1\leq n$ and $k-I_2+1\leq n$.
Finally, $k\leq 2n+1$.
The coloring constructed above (\ref{})
gives us the example of the irreducible $(2n+1)$-coloring.
\end{proof}

\end{document}